\numberwithin{equation}{section}
\newtheorem{thm}{Theorem}[section]
\newtheorem{cor}[thm]{Corollary}
\newtheorem{prop}[thm]{Proposition}
\newtheorem{lem}[thm]{Lemma}
\newtheorem*{openproblem*}{Problem}
\newtheorem*{quest*}{Question}
\newtheorem*{problem*}{Problem}
\newtheorem*{claim*}{Claim}
\theoremstyle{definition}
\theoremstyle{remark}
\newtheorem{rem}[thm]{Remark}
\newcommand{\bC}{\mathbb{C}}
\newcommand{\bH}{\mathbb{H}}
\newcommand{\bN}{\mathbb{N}}
\newcommand{\bQ}{\mathbb{Q}}
\newcommand{\bR}{\mathbb{R}}
\newcommand{\bS}{\mathbb{S}}
\newcommand{\bZ}{\mathbb{Z}}
\newcommand\ra{\rightarrow}
\newcommand\lra{\longrightarrow}
\newcommand\B{\mathrm{B}}
\newcommand\SO{\mathrm{SO}}
\newcommand\Gl{\mathrm{GL}}
\newcommand\Diff{\mathrm{Diff}}
\newcommand\Bun{\mathrm{Bun}}
\newcommand{\hcoker}{/\!\!/}
\newcommand{\map}{\mathrm{map}}
\let\c@equation\c@thm
\numberwithin{equation}{section}
\let\@wraptoccontribs\wraptoccontribs
\title[A note on invariants of foliated $3$-sphere bundles]{A note on invariants of foliated $3$-sphere bundles}
\author[]{Nils Prigge}
\address{Stockholms universitet\\ 
Matematiska institutionen\\106 91 Stockholm}
\email{nils.prigge@math.su.se}
\begin{document}
\begin{abstract}
In this note we prove that $H^*(\B\SO(4);\bQ)$ injects into the group cohomology of $\Diff^+(\bS^{3})$ with rational coefficients. The proof is based on an idea of Nariman who proved that the monomials in the Euler and Pontrjagin classes are nontrivial in $H^*(\B\Diff_+^{\delta}(\bS^{2n-1});\bQ)$.
\end{abstract}
\maketitle
\section{Introduction}
Let $\B \mathrm{S} \Gamma_d$ be the classifying space of Haefliger structures for codimension $d$ foliations that are transversely oriented and denote by $\nu:\B\mathrm{S}\Gamma_d\ra\B\Gl^+_d(\bR)$ the map that records the normal bundle of the foliation (see \cite{Hae71} for details). We consider the space of bundle maps $\Bun(TM,\nu^*\gamma_d)$ where $\gamma_d$ denotes the universal oriented vector bundle over $\B\Gl_d^+(\bR)$. It has a $\Diff_+(M)$-action by precomposition with the differential of a diffeomorphism, and Nariman proved (cf.\ \cite{Nar17} and \cite[Cor.\,2.5]{Nar23}) that the map $\B\Diff_+^{\delta}(M)\ra \B\Diff_+(M)$ induced by the inclusion of $\Diff_+^{\delta}(M)$, the group of diffeomorphisms with the discrete topology, factors through an acyclic map 
\[\beta:\B\Diff_+^{\delta}(M)\lra \Bun(TM,\nu^*\gamma_d)\hcoker \Diff_+(M).\]
This means in particular that $\beta$ induces an isomorphism
\begin{equation}
 H^*(\B\Diff^{\delta}_+(M);\bQ)\cong H^*(\Bun(TM,\nu^*\gamma_d)\hcoker \Diff_+(M);\bQ),
\end{equation}
and under this isomorphism the map on cohomology induced by $\B\Diff^{\delta}_+(M)\to \B\Diff_+(M)$ agrees with the map induced by the projection \[p\colon\Bun(TM,\nu^*\gamma_d)\hcoker \Diff_+(M)\ra\B\Diff_+(M).\]

If a Lie group $G$ acts smoothly on $M$, there is a commutative diagram
\begin{equation}\label{Gaction}
\begin{tikzcd}
  \Bun(TM,\nu^*\gamma_{d})\hcoker G\arrow{r}\arrow{d} & \Bun(TM,\nu^*\gamma_{d}) \hcoker \Diff_+(M)\arrow{d}\\
  \B G\arrow{r} & \B\Diff_+(M)
\end{tikzcd}
\end{equation}
and Nariman shows in \cite[Sect.\ 3]{Nar23} that if $G$ is a torus acting freely on $M$, then $\Bun(TM,\nu^*\gamma_{d})$ has a $G$-fixed point so the left vertical map has a section. Hence, the map on cohomology induced by $\B G\ra \B\Diff_+(M)$ factors as 
\[H^*(\B\Diff_+(M))\lra H^*(\B\Diff^{\delta}_+(M))\lra H^*(\B G).\]
For an odd sphere one can define classes $e,p_1,\hdots,p_{n-1}\in H^*(\B\Diff_+(\bS^{2n-1})$ that pull back to the Euler and Pontrjagin classes along the map $\B\SO(2n)\ra \B\Diff_+(\bS^{2n-1})$ induced by the action of $\SO(2n)$ on $\bS^{2n-1}$. If we further restrict to the free $S^1$-action, the image of the monomials in the Euler and Pontrjagin classes in the cohomology of $\B S^1$ are nontrivial and it follows that they are nontrivial in $ H^*(\B\Diff^{\delta}_+(\bS^{2n-1}))$ \cite[Thm 1.2]{Nar23}. But this poses the question about injectivity of the map 
\begin{equation}\label{question}
 \bQ[e,p_1,\hdots,p_{n-1}]\subset H^*(\B\Diff_+(\bS^{2n-1});\bQ)\ra H^*(\B\Diff^{\delta}_+(\bS^{2n-1});\bQ).
\end{equation}
The obvious obstacle to making a statement about injectivity of \eqref{question}
is that the Krull dimension of $H^*(\B S^1;\bQ)$ is too small. Instead, we should use the action of the maximal torus $T^n\subset \SO(2n)$ on $\bS^{2n-1}$. However, $T^n$ is not acting freely on $\bS^{2n-1}$ so that one does not know whether $\Bun(T\bS^{2n-1},\nu^*\gamma_{2n-1})$ has a $T^{n}$-fixed point from Nariman's construction. The main result of this note circumvents the need to construct actual fixed points for $n=2$.
\begin{thm}\label{MAIN}
 The induced map on cohomology
 \begin{equation}
  H^*(\B T^2;\bQ) \lra H^*(\Bun(T\bS^{3},\nu^*\gamma_{3})\hcoker T^2;\bQ)
 \end{equation}
 is an injection.
\end{thm}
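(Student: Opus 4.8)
The plan is to realise the mapping space as an equivariant section space and to replace the missing global $T^2$-fixed point by a fixed point over a single free orbit. A bundle map $T\bS^3\ra\nu^*\gamma_3$ is the same datum as a lift of the classifying map $\tau\colon\bS^3\ra\Gl_3^+(\bR)$ of $T\bS^3$ along $\nu$, so that $T^2$-equivariantly
\[
\Bun(T\bS^3,\nu^*\gamma_3)\simeq\mathrm{Sect}_{\bS^3}(\mathcal E),
\]
where $\mathcal E\ra\bS^3$ is the $T^2$-fibration with fibre $F=\mathrm{hofib}(\nu)=\overline{\mathrm{B}\Gamma_3}$. By Haefliger's theorem $F$ is $3$-connected, and this high connectivity, together with $\dim\bS^3=3$, is what drives the obstruction theory below. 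Writing $\bS^3\subset\bC^2$, the standard $T^2=\{(e^{i\theta_1},e^{i\theta_2})\}$ acts with two singular circles $C_1=\{z_2=0\}$ and $C_2=\{z_1=0\}$, of stabilisers $1\times S^1$ and $S^1\times1$, and freely on the complement $U=\bS^3\setminus(C_1\cup C_2)$, which retracts onto one principal orbit $U\simeq T^2$.

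The first move is to exploit this free orbit. Over $U$ an equivariant section of $\mathcal E$ exists trivially (a point of $F$ translated around the orbit), so $\mathrm{Sect}_U(\mathcal E)$ has a $T^2$-fixed point and the projection $p_U\colon\mathrm{Sect}_U(\mathcal E)\hcoker T^2\ra\B T^2$ admits a section; hence $p_U^*$ is split injective. Since $p=p_U\circ r_U$, where $r_U$ is restriction of sections to $U$, the theorem reduces to showing that
\[
r_U^*\colon H^*(\mathrm{Sect}_U(\mathcal E)\hcoker T^2;\bQ)\lra H^*(\Bun(T\bS^3,\nu^*\gamma_3)\hcoker T^2;\bQ)
\]
is injective on the image of $p_U^*$, i.e. that restricting to the free orbit does not annihilate the classes $p_U^*(H^*(\B T^2;\bQ))$.

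To get started on this I would feed in the two free circle subgroups. The diagonal $\Delta$ and anti-diagonal $\bar\Delta$ are the only circles in $T^2$ acting freely on $\bS^3$; since $\bS^3/S^1$ is $2$-dimensional and $F$ is $3$-connected, obstruction theory produces a $\Delta$- (resp.\ $\bar\Delta$-) equivariant section of $\mathcal E$, hence a genuine fixed point of $\Bun$ and a section of $p_\Delta$ (resp.\ $p_{\bar\Delta}$). By naturality of the Serre spectral sequence along $\B\Delta\ra\B T^2$ and $\B\bar\Delta\ra\B T^2$, any class killed by $p^*$ restricts to zero on both $\B\Delta$ and $\B\bar\Delta$; with $H^*(\B T^2;\bQ)=\bQ[t_1,t_2]$ this yields
\[
\ker p^*\subseteq\ker(\mathrm{res}_\Delta)\cap\ker(\mathrm{res}_{\bar\Delta})=(t_1-t_2)\cap(t_1+t_2)=(t_1^2-t_2^2).
\]
Thus only the principal ideal generated by $t_1^2-t_2^2$ can lie in the kernel, and in particular $p^*$ is injective in degrees below $4$, so the generators $t_1,t_2$ already survive.

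The hard part, and the step that genuinely circumvents building a $T^2$-fixed point, will be to remove this residual ideal. Here I would use the genus-one Heegaard splitting $\bS^3=V_1\cup_U V_2$ into solid tori $V_i\simeq C_i$, which presents the section space as a homotopy pullback $\Bun\simeq\mathrm{Sect}_{V_1}(\mathcal E)\times^h_{\mathrm{Sect}_U(\mathcal E)}\mathrm{Sect}_{V_2}(\mathcal E)$ over $\B T^2$. Over $V_1$ the subtorus $T_1=S^1\times1$ acts freely and detects all powers of $t_1$, and symmetrically $V_2$ detects all powers of $t_2$; the aim is to show that these two families of surviving classes, assembled through the pullback, force $t_1^2-t_2^2$ and all of its multiples to survive in $\Bun\hcoker T^2$. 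Concretely I would run the induced map of Serre spectral sequences for $r_U$ and argue that, because the fibre of $r_U$ is highly connected (its lowest nonzero rational cohomology sits in a range controlled by the connectivity of $\Omega^2F$), no differential can carry a class of $H^{>0}(\Bun)$ onto an element of $(t_1^2-t_2^2)$ in the bottom row. I expect the essential difficulty to be exactly this vanishing of the higher differentials in \emph{all} degrees: the free circles and the connectivity of $F$ dispose of the low-degree and off-diagonal contributions at once, but pinning down that the ideal $(t_1^2-t_2^2)$ is never hit — equivalently that $r_U^*$ is injective on $p_U^*(\bQ[t_1,t_2])$ — is where the real work lies, and where the interplay between the Heegaard decomposition and the $3$-connectivity of $\overline{\mathrm{B}\Gamma_3}$ must be used in an essential way.
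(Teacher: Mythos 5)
There is a genuine gap, and it sits exactly where you say "the real work lies." Your first half is correct and is essentially Nariman's fixed-point argument applied to the only two circles in $T^2$ that act freely on $\bS^3$: it validly yields $\ker p^*\subseteq(t_1-t_2)\cap(t_1+t_2)=(t_1^2-t_2^2)$. But the remaining step is the entire content of the theorem, and the strategy you sketch for it cannot close. Each codimension-one subtorus $K\subset T^2$ for which you can produce a fixed point (or an injection $H^*(\B K)\hookrightarrow H^*(\mathrm{Sect}\hcoker K)$) only confines $\ker p^*$ to the principal ideal $\ker(H^*(\B T^2)\to H^*(\B K))$, which is generated by a single linear form; finitely many such subtori therefore only ever trap the kernel inside a nonzero principal ideal (your Heegaard refinement would at best add the forms $t_1$ and $t_2$, leaving $\ker p^*\subseteq(t_1t_2(t_1^2-t_2^2))$). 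To conclude $\ker p^*=0$ by this divisibility argument one needs \emph{infinitely many} distinct linear forms, hence infinitely many distinct subtori with the injectivity property. Moreover the spectral-sequence claim you lean on --- that the connectivity of the fibre of $r_U$ forbids differentials from hitting $(t_1^2-t_2^2)$ in the bottom row --- is only a low-degree statement: the connectivity of $\overline{\B\Gamma}_3$ (which is $4$-connected by Thurston, not merely $3$-connected) controls nothing in arbitrarily high degrees, where the ideal $(t_1^2-t_2^2)$ still lives.

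The paper's mechanism for producing the missing infinite family is the idea your proposal does not contain. For each odd $m$ and $n=m+2$ it considers the non-free circle $K_{m,n}=\{(\lambda^m,\lambda^n)\}$ and the equivariant map $\pi_{m,n}(z_1,z_2)=(z_1^m,z_2^n)/\|(z_1^m,z_2^n)\|$ from the freely-acting $(\bS^3,S^1)$ to $(\bS^3,K_{m,n})$. Precomposition gives a $c_{m,n}$-equivariant map $\Bun(T\bS^3,\nu^*\gamma_3)\to\Bun(\pi_{m,n}^*T\bS^3,\nu^*\gamma_3)$ which is a rational equivalence because $\pi_{m,n}$ is one and $\overline{\B\Gamma}_3$ is highly connected; the pulled-back bundle $\pi_{m,n}^*T\bS^3$ is shown to be $S^1$-equivariantly trivial using the double cover $\bS^3\to\SO(3)$ (this is where the condition $n-m=2$ enters), so the target \emph{does} have an honest $S^1$-fixed point. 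This gives injectivity of $H^*(\B K_{m,n};\bQ)\to H^*(\Bun(T\bS^3,\nu^*\gamma_3)\hcoker K_{m,n};\bQ)$ for infinitely many distinct $K_{m,n}$, and hence $\ker p^*$ is divisible by infinitely many pairwise non-associate linear forms $mt_2-nt_1$, forcing it to vanish. Without some such construction of infinitely many "virtually fixed" subtori, your argument stops at $(t_1^2-t_2^2)$.
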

\begin{cor}
The subring\label{main} $\bQ[p_1,e]\subset H^*(\B \Diff_+(\bS^{3});\bQ)$ defined by the Euler and Pontrjagin class injects into $H^*(\B\Diff^{\delta}_+(\bS^{3});\bQ)$
\end{cor}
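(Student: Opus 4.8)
The plan is to deduce the corollary from Theorem~\ref{MAIN} together with Nariman's identification recalled in the introduction and the commutative square~\eqref{Gaction} for the maximal torus. By Nariman's theorem the acyclic map $\beta$ gives an isomorphism $H^*(\B\Diff^{\delta}_+(\bS^3);\bQ)\cong H^*(\Bun(T\bS^3,\nu^*\gamma_3)\hcoker\Diff_+(\bS^3);\bQ)$ under which the map induced by $\B\Diff^{\delta}_+(\bS^3)\to\B\Diff_+(\bS^3)$ becomes the map $p^*$ induced by the projection $p$. Thus it suffices to prove that $p^*$ is injective on the subring $\bQ[p_1,e]\subset H^*(\B\Diff_+(\bS^3);\bQ)$.

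I would now specialise the square~\eqref{Gaction} to $G=T^2\subset\SO(4)$ acting on $\bS^3$ through the standard representation on $\bC^2$, writing $f$ for the top horizontal map, $\pi$ for the left vertical map, and $g\colon\B T^2\to\B\Diff_+(\bS^3)$ for the bottom map, which factors through $\B\SO(4)$. Commutativity of the square gives $f^*\circ p^*=\pi^*\circ g^*$. Hence if the composite $\pi^*\circ g^*$ is injective on $\bQ[p_1,e]$, then so is $f^*\circ p^*|_{\bQ[p_1,e]}$, which forces $p^*|_{\bQ[p_1,e]}$ to be injective (if $p^*x=0$ then $f^*p^*x=0$, so $x=0$). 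Theorem~\ref{MAIN} supplies the injectivity of $\pi^*$, so the problem reduces to checking that $g^*$ is injective on $\bQ[p_1,e]$. This is precisely the role played, in the free-action case, by a section of $\pi$ coming from a $T^2$-fixed point; Theorem~\ref{MAIN} replaces that section.

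The remaining point is a direct computation. Since $e$ and $p_1$ restrict to the Euler and first Pontrjagin classes along $\B\SO(4)\to\B\Diff_+(\bS^3)$, and $g$ factors through $\B\SO(4)$, the map $g^*\colon\bQ[p_1,e]\to H^*(\B T^2;\bQ)=\bQ[t_1,t_2]$ sends $e\mapsto t_1t_2$ and $p_1\mapsto t_1^2+t_2^2$, where $t_1,t_2$ are the Chern roots of the standard $T^2$-representation. To see this graded ring map is injective I would order monomials lexicographically with $t_1>t_2$ and observe that $e^ap_1^b$ has leading term $t_1^{a+2b}t_2^{a}$; as the pair $(a+2b,a)$ determines $(a,b)$, distinct monomials in $e$ and $p_1$ have distinct leading terms and hence linearly independent images, so $g^*$ is injective in each degree. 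The only genuine input beyond Theorem~\ref{MAIN} is this elementary check, so I do not expect a serious obstacle here; the points requiring care are instead the two bookkeeping steps, namely transporting the map $\B\Diff^{\delta}_+(\bS^3)\to\B\Diff_+(\bS^3)$ to $p^*$ through Nariman's isomorphism and verifying the factorisation of $g$ through $\B\SO(4)$ so that $e,p_1$ restrict as claimed.
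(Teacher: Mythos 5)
Your proposal is correct and follows the paper's own argument: both deduce the corollary by combining the commutativity of diagram~\eqref{Gaction} for $G=T^2$, the injectivity supplied by Theorem~\ref{MAIN}, and the classical fact that $\bQ[p_1,e]\subset H^*(\B\SO(4);\bQ)$ injects into $H^*(\B T^2;\bQ)$. The only difference is that you spell out the leading-term verification of that last injection, which the paper simply quotes as known.
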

\begin{proof}
We know that $\bQ[p_1,\hdots,p_{n-1},e]\subset H^*(\B \Diff_+(\bS^{2n-1};\bQ)$ injects into $H^*(\B T^n;\bQ)$ under the map $\B T^n\ra \B\Diff_+(\bS^{2n-1})$. It then follows from commutativity of \eqref{Gaction} and Theorem \ref{MAIN} that the composite 
\[\bQ[p_1,e]\ra H^*(\B\Diff^{\delta}_+(\bS^{3}))\ra H^*(\Bun(T\bS^{3},\nu^*\gamma_{3})\hcoker \Diff_+(\bS^{3})).\]
is a injection, so that the first map is injective as well. 
\end{proof}
\begin{rem}
 In a forthcoming paper \cite{Pri24} that previously appeared as an appendix to \cite{Nar23} we proved that $H^*(\B\SO(4);\bR)$ injects into the smooth cohomology $H^*_{\mathrm{sm}}(\Diff_+(\bS^3);\bR)$ using the method developed by Haefliger \cite{Hae78}. This statement follows from Corollary \ref{main}, but the real homotopy theory computation is still interesting as it showcases an interesting relation between $p_1^2$ and a continuously varying cohomology class as pointed out by Morita (cf.\ \cite{Nar23}).
\end{rem}
The idea for the proof of Theorem \ref{MAIN} is quite simple and we learned it from \cite[Example 3.1.16]{AP93}. Given a space $X$ with a torus action so that infinitely many distinct subtori $K_i\subset T$ of codimension $1$ occur as stabilizers, the projection $p\colon X\hcoker T \ra \B T$ induces an injection on cohomology. This is because the map induced by the projection $p:X\hcoker K_i\ra \B K_i$ is an injection on cohomology (as there is a $K_i$-fixed point) and from the commutativity of the diagram
\begin{equation*}
	\begin{tikzcd}[ampersand replacement=\&,row sep=small,column sep=small]
		H^*(X\hcoker K_i) \& H^*(X\hcoker T)\arrow{l}\\
		H^*(\B K_i)\arrow{u} \& H^*(\B T)\arrow{u}\arrow{l}
	\end{tikzcd}
\end{equation*}
it follows that 
\[ \ker(H^*(\B T)\ra H^*(X\hcoker T))\subset \ker(H^*(\B T)\ra H^*(\B K_i)),\] 
where $\ker(H^*(\B T)\ra H^*(\B K_i))$ is an ideal generated by a linear polynomial (with integral coefficients) $f_i\in H^2(\B T)$. Hence, an element $x\in \ker(H^*(\B T)\ra H^*(X\hcoker T))$ is divisible by infinitely may  distinct linear polynomials $f_i$ as the tori $K_i$ are distinct and therefore $x=0$.

\medskip
So the idea of the proof is to construct bundle maps $\tau\in \Bun(T\bS^{2n-1},\nu^*\gamma_{2n-1})$ with prescribed stabilizers, which as it turns out is still difficult to do. However, for $n=2$ we are able to show that there exists infinitely many tori $K\subset T^2$ with the property that
\begin{equation*}
 H^*(\B K)\hookrightarrow H^*(\Bun(T\bS^3,\nu^*\gamma_3)\hcoker K).
 \end{equation*}
 without using actual fixed points of the action, which proves Theorem \ref{MAIN}.
\section{Main result}
For integers $m,n\in \bZ$ we denote by $c_{m,n}\colon S^1\ra T^2$ the group homomorphism defined by $c_{m,n}(\lambda)=(\lambda^m,\lambda^n)$. Codimension 1 tori are indexed by integers $(m,n)\in \bZ^2/((m,n)\sim -(m,n))$ with $\mathrm{gcd}(m,n)=1$ as the image $K_{m,n}=\mathrm{im}(c_{m,n})$. Theorem \ref{MAIN} follows from the following statement which we prove in this section.
\begin{prop}\label{InjectivityKmn}
  Let $m\in \bN$ be odd and $n=m+2$, then the map 
 \[ H^*(\B K_{m,n};\bQ)\lra H^*(\Bun(T\bS^3,\nu^*\gamma_3)\hcoker K_{m,n};\bQ)\]
 is injective.
\end{prop}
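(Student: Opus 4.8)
The plan is to treat the projection $p\colon \Bun(T\bS^3,\nu^*\gamma_3)\hcoker K_{m,n}\ra\B K_{m,n}$ as a Borel construction and to show that $p^*$ is split injective on rational cohomology by producing a section up to homotopy after rationalisation. There is no honest $K_{m,n}$-fixed bundle map: the isotropy group $\bZ/m$ of a point on the circle $\{z_2=0\}$ acts on its normal line $\bC$ through the character $\lambda\mapsto\lambda^{n}$, so $T\bS^3$ is not $K_{m,n}$-equivariantly pulled back from the quotient. A \emph{homotopy} fixed point will nonetheless suffice. To set this up, recall that a bundle map $T\bS^3\ra\nu^*\gamma_3$ is the same as a lift along $\nu$ of the map $\tau\colon\bS^3\ra\B\Gl_3^+(\bR)$ classifying $T\bS^3$; hence $\Bun(T\bS^3,\nu^*\gamma_3)=\Gamma_{\bS^3}(\mathcal{E})$ is the section space of the fibration $\mathcal{E}=\tau^*\B\mathrm{S}\Gamma_3\ra\bS^3$ with fibre $F:=\mathrm{hofib}(\nu)=\overline{\B\mathrm{S}\Gamma_3}$. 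The $K_{m,n}$-action is induced by the smooth action on $\bS^3$, so $\mathcal{E}\ra\bS^3$ is $K_{m,n}$-equivariant and there is the usual fibration $\Gamma_{\bS^3}(\mathcal{E})\ra \Gamma_{\bS^3}(\mathcal{E})\hcoker K_{m,n}\xrightarrow{\,p\,}\B K_{m,n}$.

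I would then translate sections of $p$ into a section problem over the Borel construction of the base. Homotopy fixed points of the action on the section space are computed as
\[\Gamma_{\bS^3}(\mathcal{E})^{hK_{m,n}}\simeq\Gamma_{B}\big(\mathcal{E}\hcoker K_{m,n}\ra B\big),\qquad B:=\bS^3\hcoker K_{m,n},\]
that is, as ordinary sections over $B$ of the fibrewise Borel construction, a fibration whose fibre is again $F$. Any point of this space, or of its fibrewise rationalisation, gives a section of $p$ up to homotopy and therefore forces $p^*$ to be injective on $H^*(-;\bQ)$. It remains to produce a rational section, for which I would use Mather--Thurston only through the fact that $F=\overline{\B\mathrm{S}\Gamma_3}$ is simply connected.

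At this point the hypotheses $m$ odd and $n=m+2$ enter through the geometry of the action. Since $\gcd(m,m+2)=\gcd(m,2)=1$ and $m,n\neq 0$, the $K_{m,n}$-action on $\bS^3$ is almost free, with finite isotropy $\bZ/m$ and $\bZ/n$ on the two characteristic circles; indeed $-1\in K_{m,n}$ acts as the free antipodal involution. Hence $B$ is simply connected and rationally equivalent to the quotient $\bS^3/K_{m,n}$, a weighted projective line, so that $H^*(B;\bQ)\cong H^*(\bS^2;\bQ)$ and in particular $H^{\geq 3}(B;\bQ)=0$. Building a section of the fibrewise rationalisation $\mathcal{E}_\bQ\hcoker K_{m,n}\ra B$ over the skeleta of $B$, the successive obstructions lie in $H^{i+1}(B;\pi_i(F)\otimes\bQ)$; such a group can be nonzero only if simultaneously $i+1\leq 2$ and $\pi_i(F)\otimes\bQ\neq 0$, which is impossible because $F$ is simply connected. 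So all rational obstructions vanish, a rational section exists, and $p^*$ is rationally injective, which is the assertion of the proposition.

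I expect the real work to be in the two reductions rather than in any single computation: identifying $\Bun(T\bS^3,\nu^*\gamma_3)$ with the section space $\Gamma_{\bS^3}(\mathcal{E})$ and its homotopy fixed points with honest sections over $B$, and checking that a rational homotopy fixed point genuinely splits $p^*$ after fibrewise rationalisation over the simply connected base $\B K_{m,n}$. Once these are in place, the vanishing of the obstruction groups is forced by exactly two structural inputs: almost freeness of the action, which gives $H^{\geq 3}(B;\bQ)=0$, and simple connectivity of $\overline{\B\mathrm{S}\Gamma_3}$.
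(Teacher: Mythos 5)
Your argument is correct in outline, but it follows a genuinely different route from the paper. The paper never mentions homotopy fixed points: it pulls back along the $c_{m,n}$-equivariant map $\pi_{m,n}(z_1,z_2)=(z_1^m,z_2^n)/||(z_1^m,z_2^n)||$ from the free Hopf $S^1$-sphere, shows that precomposition $\Bun(T\bS^3,\nu^*\gamma_3)\to\Bun(\pi_{m,n}^*T\bS^3,\nu^*\gamma_3)$ is a (nonequivariant) rational equivalence, and then produces an \emph{honest} $S^1$-fixed point of the target by proving that $\pi_{m,n}^*T\bS^3$ is $S^1$-equivariantly trivial; that last step is where ``$m$ odd, $n=m+2$'' is really used, via the quaternionic double cover $\bS^3\to\SO(3)$. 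Your replacement --- identify $\Bun(T\bS^3,\nu^*\gamma_3)$ with a section space, identify its homotopy fixed points with sections over $B=\bS^3\hcoker K_{m,n}$, and kill the rational obstructions using $H^{\geq 3}(B;\bQ)=0$ (almost-freeness) together with the connectivity of the fibre --- trades this explicit geometry for standard but nontrivial machinery: equivariant fibrewise rationalisation, the fact that fibrewise rationalisation of $\mathcal{E}\to\bS^3$ rationalises the section space (M\o ller/Federer), and the Serre spectral sequence comparison over $\B K_{m,n}$; these are exactly the points you flag as ``the real work,'' and they do go through. Two remarks. First, for the comparison step you need the section space to be connected and nilpotent, which you get from the full $4$-connectivity of $\overline{\B\Gamma}_3$ (Thurston) against the $3$-dimensional base, not from mere simple connectivity as you advertise; the paper uses the same connectivity input, but to see that the relevant mapping spaces are connected and that precomposition with $\pi_{m,n}$ is a rational equivalence. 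Second, your proof uses only $mn\neq 0$ and $\gcd(m,n)=1$, so it establishes the proposition for \emph{every} codimension-one subtorus $K_{m,n}$ with $mn\neq 0$ --- strictly more than the paper's family $n=m+2$ --- and shows that the arithmetic hypothesis is an artifact of the fixed-point construction rather than of the problem. Either family is infinite, so both versions suffice for the main theorem.
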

We consider the $T^2$-action on $\bS^3$ given by $(\lambda_1,\lambda_2)\cdot (z_1,z_2)=(\lambda_1\cdot z_1,\lambda_2\cdot z_2)$, and we denote by $(S^3,K_{m,n})$ the $3$-sphere with the restricted $K_{m,n}$-action. For $m=n=1$ this corresponds to the usual free $S^1$-action on $\bS^3$ which we denote by $(\bS^3,S^1)$.
\begin{lem}
 The map 
 \begin{align*}
 \pi_{m,n}\colon (\bS^3,S^1)\ra (\bS^3,K_{m,n}),\qquad \pi_{m,n}(z_1,z_2)=\frac{(z_1^m,z_2^n)}{||(z_1^m,z_2^n)||}
 \end{align*}
 is equivariant with respect to $c_{m,n}$.
\end{lem}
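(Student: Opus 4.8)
The plan is a direct verification, and the only thing that needs genuine care is fixing the two actions correctly. First I would pin down conventions: the source $(\bS^3,S^1)$ carries the diagonal action $\lambda\cdot(z_1,z_2)=(\lambda z_1,\lambda z_2)$ (the case $m=n=1$ of the restricted $T^2$-action), while the target $(\bS^3,K_{m,n})$ carries the weighted action in which the group element $c_{m,n}(\lambda)=(\lambda^m,\lambda^n)$ sends $(w_1,w_2)$ to $(\lambda^m w_1,\lambda^n w_2)$. With these conventions, equivariance of $\pi_{m,n}$ with respect to $c_{m,n}$ is precisely the identity $\pi_{m,n}(\lambda z_1,\lambda z_2)=c_{m,n}(\lambda)\cdot\pi_{m,n}(z_1,z_2)$ for every $\lambda\in S^1$ and $(z_1,z_2)\in\bS^3$.

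Before checking this I would confirm that $\pi_{m,n}$ is well defined, i.e. that the normalizing denominator never vanishes. For the relevant indices ($m\geq 1$ and $n=m+2\geq 3$), the vector $(z_1^m,z_2^n)$ is zero only if $z_1=z_2=0$, which is impossible on $\bS^3$; hence $(z_1^m,z_2^n)\neq 0$, and after dividing by its norm the image has norm $1$ and so lands in $\bS^3$.

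The computation itself is the heart of the matter, and it is short precisely because $|\lambda|=1$. Applying $\pi_{m,n}$ to $(\lambda z_1,\lambda z_2)$ yields the numerator $((\lambda z_1)^m,(\lambda z_2)^n)=(\lambda^m z_1^m,\lambda^n z_2^n)$, and since $|\lambda^m|=|\lambda^n|=1$ the norm is unchanged, $\|(\lambda^m z_1^m,\lambda^n z_2^n)\|=\|(z_1^m,z_2^n)\|$. Therefore the scalars $\lambda^m$ and $\lambda^n$ factor out coordinatewise from the normalized expression, giving exactly $(\lambda^m,\lambda^n)\cdot\pi_{m,n}(z_1,z_2)=c_{m,n}(\lambda)\cdot\pi_{m,n}(z_1,z_2)$, which is the desired equivariance.

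I do not expect a real obstacle here; the single subtle point is matching the diagonal $S^1$-action upstairs against the weighted $K_{m,n}$-action downstairs and observing that the normalizing factor is itself $S^1$-invariant, so it commutes with the action rather than interfering with it. I would close by recording that $\pi_{m,n}$ is in addition smooth and surjective, since this is what will make it usable for comparing the two Borel constructions in the argument that follows.
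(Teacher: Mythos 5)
Your verification is correct: the paper omits a proof of this lemma entirely (treating it as an immediate computation), and your direct check — that the normalizing denominator is nonzero and $S^1$-invariant since $|\lambda|=1$, so $\lambda^m$ and $\lambda^n$ factor out coordinatewise — is exactly the intended argument. Nothing is missing.
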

The pullback $\pi_{m,n}^*T\bS^3$ is an $S^1$-equivariant vector bundle over $(\bS^3,S^1)$ and there is a canonical bundle map $\bar{\pi}_{m,n}:\pi_{m,n}^*T\bS^3\ra T\bS^3$ covering $\pi_{m,n}$ which is equivariant with respect to $c_{m,n}$. Precomposition with $\bar{\pi}_{m,n}$ defines a map
\begin{equation}\label{psi}
 \psi\colon\Bun(T\bS^3,\nu^*\gamma_3)\lra \Bun(\pi^*_{m,n}T\bS^3,\nu^*\gamma_3)
\end{equation}
which is equivariant by construction with respect to $c_{m,n}^{-1}\colon K_{m,n}\ra S^1$ if the inverse exists (i.e.\ if $\gcd(m,n)=1$).
\begin{lem}\label{Qequivalence}
 The map $\psi$ in \eqref{psi} is a rational equivalence. 
\end{lem}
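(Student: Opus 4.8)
The plan is to model both spaces of bundle maps as mapping spaces out of $\bS^3$ and to recognise $\psi$ as precomposition with a self-map of nonzero degree. Recall that for a rank-$3$ bundle $E\to M$ a fibrewise-linear isomorphism $E\to\nu^*\gamma_3$ is the same datum as a lift of the classifying map $\tau_E\colon M\to\B\Gl_3^+(\bR)$ of $E$ along $\nu$; hence $\Bun(E,\nu^*\gamma_3)$ is weakly equivalent to the space of sections of the pullback fibration $\tau_E^*\,\B\mathrm{S}\Gamma_3\to M$, whose fibre is the homotopy fibre $\Fib(\nu)$ of $\nu$. Since $\bS^3$ is parallelisable, $T\bS^3$ is trivial, and therefore so is $\pi_{m,n}^*T\bS^3$. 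Choosing on the pullback the trivialisation induced from a fixed trivialisation of $T\bS^3$, both section spaces become the free mapping space $\map(\bS^3,\Fib(\nu))$, and under these identifications the canonical bundle map $\bar\pi_{m,n}$ becomes $\pi_{m,n}\times\mathrm{id}_{\bR^3}$, so that $\psi$ in \eqref{psi} becomes precomposition $g\mapsto g\circ\pi_{m,n}$.

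The next step is the degree of $\pi_{m,n}\colon\bS^3\to\bS^3$. Viewing $\pi_{m,n}$ as the map on the link of the origin induced by the holomorphic germ $(z_1,z_2)\mapsto(z_1^m,z_2^n)$, its degree equals the local degree of that germ, namely the intersection multiplicity $\dim_{\bC}\bC[z_1,z_2]/(z_1^m,z_2^n)=mn$; equivalently, under $\bS^3=\bS^1*\bS^1$ it is the join of the degree-$m$ and degree-$n$ maps of circles. In particular $\deg(\pi_{m,n})=mn\neq0$ for all $m,n$ occurring in Proposition \ref{InjectivityKmn}. Since $\pi_{m,n}$ fixes the point $(1,0)\in\bS^3$, it preserves the basepoint used for the evaluation fibration, so $\psi$ induces a self-map of the fibration $\Omega^3\Fib(\nu)\to\map(\bS^3,\Fib(\nu))\to\Fib(\nu)$ that is the identity on the base and precomposition with a degree-$mn$ map on the fibre $\Omega^3\Fib(\nu)$.

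It remains to compute this action on the fibre. On $\pi_j(\Omega^3\Fib(\nu))\cong\pi_{j+3}(\Fib(\nu))$ precomposition with $\pi_{m,n}$ is precomposition with $\mathrm{id}_{\bS^j}\wedge\pi_{m,n}$, a self-map of $\bS^{j+3}$ of degree $mn$. Factoring any degree-$d$ self-map of a sphere $\bS^k$ ($k\ge2$) through the pinch--fold composite $\bS^k\to\bigvee^{d}\bS^k\to\bS^k$ shows that it acts on $\pi_k(Y)$ as multiplication by $d$; hence the fibre map is multiplication by $mn$, which is a rational isomorphism because $mn\neq0$. By the high connectivity of $\Fib(\nu)$ (it is $4$-connected by Thurston's theorem), the mapping space $\map(\bS^3,\Fib(\nu))$ is simply connected, so comparing the two evaluation fibrations by the five lemma on rational homotopy groups---identity on the base, rational isomorphism on the fibre---yields that $\psi$ is a rational isomorphism on homotopy and therefore a rational equivalence.

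I expect the first paragraph to be the main obstacle: one must justify carefully that $\Bun(E,\nu^*\gamma_3)$ is modelled by the section space of $\tau_E^*\,\B\mathrm{S}\Gamma_3$ and that, with the two induced trivialisations of $T\bS^3$ and $\pi_{m,n}^*T\bS^3$, the map $\psi$ is honestly precomposition with $\pi_{m,n}$ and not merely homotopic to it up to a twist. Once this bookkeeping is settled, the degree count and the homotopy-group computation are routine.
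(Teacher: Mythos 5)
Your argument is correct and takes essentially the same route as the paper: both trivialise $T\bS^3$ and hence its pullback, identify the two spaces of bundle maps with $\map(\bS^3,\overline{\B\Gamma}_3)$, and recognise $\psi$ as precomposition with the degree-$mn$ map $\pi_{m,n}$. The only divergence is the final step, where the paper cites the rational models of mapping spaces to conclude that precomposition with a rational equivalence of the source induces a rational equivalence, whereas you prove this directly (and correctly) via the evaluation fibration, the degree computation, and the five lemma on rational homotopy groups.
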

\begin{proof}
The tangent bundle $T\bS^3$ is (non-equivariantly) trivial and hence so is the pullback $\pi_{m,n}^*T\bS^3$. This implies that the space of bundle maps is (non-equivariantly) homotopy equivalent to $\map(\bS^3,\overline{\B \Gamma}_3)$, where $\overline{\B \Gamma}_d$ denotes the homotopy fibre of $\nu:\B\mathrm{S}\Gamma_d\ra \B\Gl_d^+(\bR)$. Since $\overline{\B \Gamma}_3$ is $4$-connected \cite{Thu74}, these mapping spaces are connected and $\psi$ corresponds to the map of mapping spaces induced by precomposition with $\pi_{m,n}$, which is a rational equivalence since $\pi_{m,n}$ is (this follows directly by inspection of the rational models of mapping spaces, for example cf.\ \cite{Ber15}).
\end{proof}
In the following, we denote by $V_{k}=(\bC,\rho_{k})$ for $k\in\bZ$ the complex $S^1$-representation given by $\rho_{k}(\lambda)(z)=\lambda^k\cdot z$. We obtain $T^2$-representations by pulling back along the group homomorphism $\Delta\colon T^2\ra S^1$ given by $\Delta(\lambda_1,\lambda_2)=\lambda_2/\lambda_1$. In the next section we show that $T\bS^3$ is $T^2$-equivariantly $V$-trivial for $V=\Delta^*V_1\oplus \bR$, where $\bR$ denotes the trivial representation (see Lemma \ref{Vtrivial}). This implies that $\pi_{m,n}^*T\bS^3$ is $c_{m,n}^*V$-trivial and we use the following simple criterion to show that it is also $\bR^3$-trivial.
\begin{lem}\label{ParametrizedIso}
 Let $G$ be a Lie group acting on $X$ and $V,W$ be two $G$-representations. There is an isomorphism of equivariant $G$-vector bundles $X\times V\cong_{G}X\times W$ if and only if there is a $G$-equivariant map $X\ra \mathrm{Iso}(V,W)$, where $\mathrm{Iso}(V,W)$ denotes the space of isomorphisms with respect to the conjugation $G$-action.
\end{lem}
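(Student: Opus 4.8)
The plan is to reduce the statement to a direct translation between bundle maps and maps into a Hom-space, and then to match the two equivariance conditions against one another. Since both bundles in question are trivial, a vector bundle map $\Phi\colon X\times V\to X\times W$ covering the identity of $X$ is precisely the data of a map $\phi\colon X\to \mathrm{Hom}(V,W)$, via $\Phi(x,v)=(x,\phi(x)(v))$; this is a bijection (continuity on both sides corresponds), and $\Phi$ is a fibrewise isomorphism exactly when each $\phi(x)$ is invertible, i.e.\ when $\phi$ factors through the subspace $\mathrm{Iso}(V,W)\subset \mathrm{Hom}(V,W)$. So the whole content of the lemma sits in the equivariance, and both directions of the ``if and only if'' will drop out of a single computation.

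First I would fix conventions: the $G$-actions on the total spaces are the diagonal ones, $g\cdot(x,v)=(g\cdot x,\rho_V(g)v)$ and $g\cdot(x,w)=(g\cdot x,\rho_W(g)w)$ where $\rho_V,\rho_W$ denote the representations, and the conjugation action on $\mathrm{Iso}(V,W)$ sends $\psi$ to $\rho_W(g)\circ\psi\circ\rho_V(g)^{-1}$. Then I would write out $\Phi(g\cdot(x,v))$ and $g\cdot\Phi(x,v)$ and compare. The former equals $(g\cdot x,\phi(g\cdot x)(\rho_V(g)v))$ and the latter equals $(g\cdot x,\rho_W(g)(\phi(x)(v)))$. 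Equating the second coordinates for all $v\in V$ yields $\phi(g\cdot x)\circ\rho_V(g)=\rho_W(g)\circ\phi(x)$, which rearranges to exactly $\phi(g\cdot x)=\rho_W(g)\circ\phi(x)\circ\rho_V(g)^{-1}=g\cdot\phi(x)$.

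This last identity says precisely that $\Phi$ is $G$-equivariant if and only if $\phi$ is $G$-equivariant for the conjugation action. Reading it with $\Phi$ a given equivariant isomorphism produces the desired equivariant map $\phi\colon X\to\mathrm{Iso}(V,W)$, while reading it with $\phi$ a given equivariant map produces an equivariant bundle isomorphism $\Phi$; this settles both implications at once. I expect no genuine obstacle here, as the lemma is a formal unwinding of definitions: the only point requiring care is keeping track of the inverse $\rho_V(g)^{-1}$, so that equivariance of $\phi$ is matched with the conjugation action and not with its inverse.
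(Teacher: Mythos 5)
Your proof is correct: the paper states this lemma without proof, treating it as a formal unwinding of definitions, and your argument (trivial-bundle maps over $X$ correspond to maps $\phi\colon X\to\mathrm{Hom}(V,W)$, fibrewise invertibility means landing in $\mathrm{Iso}(V,W)$, and equivariance of $\Phi$ translates precisely into equivariance of $\phi$ for the conjugation action via $\phi(g\cdot x)=\rho_W(g)\circ\phi(x)\circ\rho_V(g)^{-1}$) is exactly the intended one.
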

\begin{prop}\label{trivial}
Let $n=m+2$, then the pullback $\pi_{m,n}^*T\bS^3$ is isomorphic to the trivial $S^1$-bundle $\bS^3\times \mathbb{R}^3 $, i.e.\,$S^1$ acts diagonally on $\bS^3\times \mathbb{R}^3 $ and trivially on $\mathbb{R}^3$.
\end{prop}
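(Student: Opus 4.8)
The plan is to feed the two structural lemmas into an explicit quaternionic trivialization. First I would apply Lemma~\ref{Vtrivial} to write $T\bS^3\cong_{T^2}\bS^3\times V$ with $V=\Delta^*V_1\oplus\bR$, and then pull back along the $c_{m,n}$-equivariant map $\pi_{m,n}$. Pulling back a fibrewise-constant but equivariantly twisted bundle along an equivariant map twists the fibre representation through $c_{m,n}$, so this yields an $S^1$-equivariant isomorphism $\pi_{m,n}^*T\bS^3\cong_{S^1}\bS^3\times c_{m,n}^*V$. Since $\Delta\circ c_{m,n}(\lambda)=\lambda^n/\lambda^m=\lambda^{\,n-m}$ and $n-m=2$, we get $c_{m,n}^*\Delta^*V_1=V_2$, hence $c_{m,n}^*V=V_2\oplus\bR$ as an $S^1$-representation. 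The task thus reduces to producing an $S^1$-equivariant isomorphism $\bS^3\times(V_2\oplus\bR)\cong_{S^1}\bS^3\times\bR^3$ with $\bR^3$ trivial.

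By Lemma~\ref{ParametrizedIso} this is the same datum as an $S^1$-equivariant map $f\colon\bS^3\to\mathrm{Iso}(V_2\oplus\bR,\bR^3)$. As the target is the trivial representation, the conjugation action is $\lambda\cdot\phi=\phi\circ\rho_{V_2\oplus\bR}(\lambda)^{-1}$, so equivariance of $f$ reads $f(\lambda x)=f(x)\circ\rho_{V_2\oplus\bR}(\lambda)^{-1}$. To build $f$ I would use quaternions: identify $\bC^2\cong\bH$ via $(z_1,z_2)\mapsto q=z_1+z_2 j$, so that $\bS^3$ is the group of unit quaternions, the copy of $\bC$ is $\bR\langle 1,i\rangle$, and the free $S^1$-action becomes left multiplication $q\mapsto\lambda q$ by unit complex numbers; identify the target $\bR^3\cong\mathrm{Im}\,\bH=\langle i,j,k\rangle$. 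Writing $V_2=\bR^2$ with $\rho_2(\lambda)$ the rotation by $2\theta$ for $\lambda=e^{i\theta}$, define $f(q)$ on the standard basis by $e_1\mapsto\bar q j q$, $e_2\mapsto\bar q k q$ (the $V_2$-summand) and $e_3\mapsto\bar q i q$ (the $\bR$-summand). Because conjugation by a unit quaternion is a rotation of $\mathrm{Im}\,\bH$, the triple $(\bar q j q,\bar q k q,\bar q i q)$ is an orthonormal frame, so $f(q)\in\mathrm{Iso}$.

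The crux is the equivariance computation. Since $\lambda$ commutes with $i$ but satisfies $\bar\lambda j\lambda=e^{-2i\theta}j$ and $\bar\lambda k\lambda=e^{-2i\theta}k$, one computes that $\bar q i q$ is $S^1$-invariant while $\bar q j q$ and $\bar q k q$ rotate by the angle $2\theta$; explicitly $f(\lambda q)e_1=\bar q\,e^{-2i\theta}j\,q=\cos(2\theta)\,\bar q j q-\sin(2\theta)\,\bar q k q$, which is exactly $f(q)\rho_2(\lambda)^{-1}e_1$, and similarly for $e_2,e_3$. This is precisely the law $f(\lambda q)=f(q)\circ\rho_{V_2\oplus\bR}(\lambda)^{-1}$ dictated by $V_2$, so Lemma~\ref{ParametrizedIso} delivers the trivialization.

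The conceptual point, and the place where the hypothesis genuinely enters, is that one cannot trivialize the $V_2$-summand alone: over $\bS^3/S^1=S^2$ it descends to $\mathcal O(2)\cong TS^2$, which has nonzero Euler class and hence no equivariant trivialization. The even-ness of $n-m=2$ is what makes $w_2$ vanish, so that the rank-three bundle $V_2\oplus\bR$ does trivialize; the quaternionic frame is the explicit incarnation of the classical isomorphism $TS^2\oplus\bR\cong\bR^3$, with the invariant direction $\bar q i q$ playing the role of the outward normal of $S^2\subset\bR^3$ and the two double-speed directions spanning the tangent plane. I expect the only real subtlety to be the bookkeeping of the conjugation action and of the doubling $\bar\lambda j\lambda=e^{-2i\theta}j$, which is exactly what forces the rotation speed to match $V_2$ rather than $V_1$; everything else is formal.
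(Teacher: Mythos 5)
Your proposal is correct and follows essentially the same route as the paper: reduce via Lemma~\ref{Vtrivial} and the pullback to the representation $c_{m,n}^*\Delta^*V_1\oplus\bR=V_2\oplus\bR$, invoke Lemma~\ref{ParametrizedIso}, and realize the required equivariant map $\bS^3\to\mathrm{Iso}$ by the quaternionic double cover $\bS^3\to\SO(3)$. The only cosmetic difference is that you write the double cover as $w\mapsto\bar q w q$ and verify its equivariance inline, whereas the paper uses $w\mapsto qw\bar q$ and outsources exactly this computation to Lemma~\ref{DoubleCover}.
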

\begin{proof}
By Lemma \ref{Vtrivial} the tangent bundle $T\bS^3$ is $T^2$-equivariantly $V$-trivial for $V=\bR\oplus \Delta^*V_1$, and hence $\pi_{m,n}^*T\bS^3\cong_{S^1} \bS^3\times (\bR\oplus c_{m,n}^*\Delta^*V_1)$. The statement follows from Lemma \ref{ParametrizedIso} if we can construct an $S^1$-equivariant map 
\begin{equation}\label{trivialization}
\bS^3\lra  \text{Iso}(\bR^3,\bR\oplus c_{m,n}^*\Delta^*V_1).
\end{equation}
We identify $\text{Iso}(\bR^3,V)\cong\Gl_3(\bR)$ and denote by $D(\lambda)\in \SO(2)$ the rotation corresponding to $\lambda\in S^1$. Then under this identification the $S^1$-action is given left multiplication with 
\begin{align*}
 \left(
 \begin{array}{c c}
  1 & 0\\
  0 & D(\lambda^{n-m})
 \end{array}
 \right)\in \SO(3).
\end{align*}
If $n-m=2$ then the double cover $f\colon\bS^3\ra \SO(3)\subset \Gl_3(\bR)$ is $S^1$-equivariant with respect to this action by Lemma \ref{DoubleCover} which concludes the proof.
\end{proof}
\begin{proof}[Proof of Prop.\ \ref{InjectivityKmn}]
It follows from Proposition \ref{trivial} there is an equivariant homeomorphism $\Bun(\pi_{m,n}^*T\bS^3,\nu^*\gamma_3)\approx_{S^1}\Bun(\bS^3\times\bR^3,\nu^*\gamma_3)$. The latter has a fixed point by the same argument as in \cite[Sect.\,3]{Nar23} so that 
\[H^*(\B S^1)\ra H^*(\Bun(\pi_{m,n}^*T\bS^3,\nu^*\gamma_3)\hcoker S^1)\]
is injective. If $m\in \bN$ is odd and $n=m+2$ then $\gcd(m,n)=1$ and $c_{m,n}\colon S^1\ra K_{m,n}$ is an isomorphism, and by Proposition \ref{Qequivalence} we have a commutative diagram 
\begin{equation}
 \begin{tikzcd}[ampersand replacement=\&]
    H^*(\Bun(\pi_{m,n}^*T\bS^3,\nu^*\gamma_3)\hcoker S^1) \arrow{r}{\psi^*}\arrow[swap]{r}{\cong} \& H^*(\Bun(T\bS^3,\nu^*\gamma_3)\hcoker K_{m,n})\\
    H^*(\B S^1) \arrow{r}{\cong}\arrow[swap]{r}{H^*(c_{m,n}^{-1})} \arrow{u}\& H^*(\B K_{m,n})\arrow{u}
 \end{tikzcd}
\end{equation}
which proves the statement.
\end{proof}
\section{Two facts about \texorpdfstring{$\bS^3$}{S3}}
We prove two elementary statements about the $3$-sphere that were used in Proposition \ref{trivial}, both relying on the fact that $\bS^3$ can be identified with the group of unit quaternions.
\begin{lem}\label{Vtrivial}
 The tangent bundle $T\bS^3$ is $T^2$-equivariantly $V$ trivial for $V=\Delta^*V_1\oplus \bR $, i.e.\ there is an isomorphism of $T^2$-equivariant vector bundles $T\bS^3\cong_{T^2}\bS^3\times V$.
\end{lem}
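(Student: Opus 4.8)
The plan is to exploit the group structure on $\bS^{3}$: identify $\bS^{3}$ with the unit quaternions $\bH^{1}$ by writing a point $(z_1,z_2)\in\bS^{3}\subset\bC^{2}$ as $q=z_1+z_2 j$, so that $\bC j=\mathrm{span}_{\bR}\{j,k\}$ and $\mathrm{Im}\,\bH=\bR i\oplus\bC j$. First I would rewrite the $T^{2}$-action in these terms. Using the relation $j\lambda=\bar\lambda j$ for $\lambda\in\bC$, I would compute $\mu q\nu=(\mu\nu)\,z_1+(\mu\bar\nu)\,z_2 j$ for $\mu,\nu\in S^{1}\subset\bC$. Hence, after choosing square roots $\mu$ with $\mu^{2}=\lambda_1\lambda_2$ and $\nu$ with $\nu^{2}=\lambda_1/\lambda_2$, the $\bR$-linear map $A_{(\lambda_1,\lambda_2)}(q)=\mu q\nu$ realizes the given action $(z_1,z_2)\mapsto(\lambda_1 z_1,\lambda_2 z_2)$, and the sign ambiguities in $\mu$ and $\nu$ cancel so that $A_{(\lambda_1,\lambda_2)}$ is well defined.

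Next I would trivialize $T\bS^{3}$ by left-invariant vector fields, i.e.\ by the $\bR$-linear-fibrewise diffeomorphism
\[ \Phi\colon \bS^{3}\times\mathrm{Im}\,\bH\xrightarrow{\ \cong\ } T\bS^{3},\qquad \Phi(q,w)=(q,qw), \]
using $T_q\bS^{3}=q\cdot\mathrm{Im}\,\bH$. Since $A_g$ is $\bR$-linear its differential is $A_g$ itself, so the induced action on $T\bS^{3}$ sends the tangent vector $qw$ at $q$ to $A_g(qw)=\mu(qw)\nu=(\mu q\nu)(\bar\nu w\nu)=A_g(q)\cdot(\bar\nu w\nu)$. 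Transporting this through $\Phi$, the induced fibre action on $\mathrm{Im}\,\bH$ is the conjugation $w\mapsto\bar\nu w\nu$. I would check that this is genuinely a $T^{2}$-action, depending only on $\nu^{2}=\lambda_1/\lambda_2$ and insensitive to the sign of $\nu$; under composition one has $\nu''=\nu'\nu$, which matches $A_{gg'}=A_g\circ A_{g'}$.

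Finally I would identify this conjugation representation with $V$. Conjugation by $\bar\nu$ fixes $i$, since $\bar\nu\in\bC$ commutes with $i$, so $\bR i$ is the trivial summand $\bR$; and for $w=zj\in\bC j$ one finds $\bar\nu(zj)\nu=z\bar\nu^{2}j=((\lambda_2/\lambda_1)z)\,j$, so $\bC j$ is acted on by multiplication by $\Delta(\lambda_1,\lambda_2)=\lambda_2/\lambda_1$, i.e.\ it is $\Delta^{*}V_1$. Thus $\Phi$ is a $T^{2}$-equivariant isomorphism $T\bS^{3}\cong_{T^{2}}\bS^{3}\times(\bR\oplus\Delta^{*}V_1)=\bS^{3}\times V$. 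The one delicate point—and the place I would be most careful—is that the $T^{2}$-action does not lift to a homomorphism $T^{2}\to\bS^{3}\times\bS^{3}$; the key is that only $\nu^{2}$ and conjugation-by-$\nu$ enter the fibre action, and both are independent of the sign of $\nu$, so this ambiguity is harmless.
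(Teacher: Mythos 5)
Your proof is correct and takes essentially the same route as the paper's: both trivialize $T\bS^3$ by left translation of unit quaternions and compute the induced $T^2$-action on the fibre $\mathrm{Im}\,\bH$, finding $\bR i$ trivial and $\bC j\cong\Delta^*V_1$. Your device of writing the action as $q\mapsto \mu q\nu$, so that only the right factor $\nu$ (acting by conjugation) touches the fibre, is a cleaner way to organize the computation than the paper's explicit basis calculation, and you correctly dispose of the sign ambiguity in $\mu,\nu$.
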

\begin{proof}
	Choose an inner product on $\bH$ with orthonormal basis given by $1,i,j,k$ and consider $\bS^3$ as the set of unit quaternions $\{z_1+z_2j\,|\,z_i\in \bC, |z_1|^2+|z_2|^2=1\}\subset \bH$ and $T\bS^3=\{(w_1,w_2)\in \bS^3\times \bH\,|\,w_2\bot w_1\}\subset \bS^3\times\bH$. We choose a basis of $T_1\bS^3$ given by $e_1=(1,i)$, $e_2=(1,j)$ and $e_3=(1,k)$ and define a trivialization of $T\bS^3$ by 
	\[\tau\colon\bS^3\times T_1\bS^3\ra T\bS^3,\quad (p,X_1)\mapsto Dl_p(X_1)\]
	where $l_w\colon\bS^3\ra \bS^3$ denotes left multiplication with $w=z_1+z_2j\in \bS^3$ so that
	\begin{align*}
		\tau(w,e_1)&=(w,z_1i-z_2k),\\
		\tau(w,e_2)&=(w,-z_2+z_1j),\\
		\tau(w,e_3)&=(w,z_2i+z_1k).
	\end{align*}
	The tangent bundle $T\bS^3$ has a left $T^2$-action via the differential and one computes that
	\begin{align*}
		D\mu_{(\lambda_1,\lambda_2)}\tau(w,e_1)&=((\lambda_1,\lambda_2)\cdot w,\lambda_1z_1i-\lambda_2z_2k),\\
		D\mu_{(\lambda_1,\lambda_2)}\tau(w,e_2)&=((\lambda_1,\lambda_2)\cdot w,-\lambda_1z_2+\lambda_2z_1j),\\
		D\mu_{(\lambda_1,\lambda_2)}\tau(w,e_3)&=((\lambda_1,\lambda_2)\cdot w,\lambda_1z_2i+\lambda_2z_1k),
	\end{align*}
	where $\mu_{\lambda_1,\lambda_2}\colon\bS^3\ra \bS^3$ denotes action of $(\lambda_1,\lambda_2)\in T^2$.

	We need to compute $\tau^{-1} \D\mu_{\lambda_1,\lambda_2}\tau$ in order to understand the induced $T^2$-action on $\bS^3\times T_1S^1$. We see directly that 
	\begin{equation}D\mu_{(\lambda_1,\lambda_2)}\tau(w,e_1)=\tau((\lambda_1,\lambda_2)\cdot w,e_1),
	\end{equation} 
	and for the other two cases let $\psi\in [0,2\pi)$ such that $\lambda_1/\lambda_2=\cos(\psi)+i \cdot \sin(\psi)$, then a straight forward calculation shows that 
	\begin{equation}\label{temporary}
    \begin{split}
	 D\mu_{(\lambda_1,\lambda_2)}\tau(w,e_2)&=\cos(\psi)\tau((\lambda_1,\lambda_2)\cdot w,e_2)-\sin(\psi)\tau((\lambda_1,\lambda_2)\cdot w,e_3)\\
	 D\mu_{(\lambda_1,\lambda_2)}\tau(w,e_3)&=\sin(\psi)\tau((\lambda_1,\lambda_2)\cdot w,e_2)+\cos(\psi)\tau((\lambda_1,\lambda_2)\cdot w,e_3).
	\end{split}
	\end{equation}
	Finally, observe that the coefficients in \eqref{temporary} only depend on $(\lambda_1,\lambda_2)$ and not on $w$, so the induced action on $\bS^3\times T_1S^1$ is a product action. Hence, $T_1S^1$ is a $T^2$-representation and  $f\colon T_1S^1\ra \bR \oplus \Delta^*V_1 $ defined by $f(e_1)=(1,0)$, $f(e_2)=(0,1)$ and $f(e_3)=(0,i)$ is an isomorphism of $T^2$-representations.
	\end{proof}

\begin{lem}\label{DoubleCover}
	The double cover group homomorphism $f\colon\bS^3\ra \SO(3)$ is $S^1$-equivariant with respect to the $S^1$-action on $\SO(3)$ given by left multiplication with 
	\begin{align*}
	  \left(
 \begin{array}{c c}
  1 & 0\\
  0 & D(\lambda^{2})
 \end{array}
 \right)\in \SO(3).
	\end{align*}
    for $\lambda\in S^1$.
\end{lem}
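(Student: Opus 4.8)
The plan is to reduce the lemma to a single identity by exploiting that $f$ is a group homomorphism, after which it becomes an elementary computation of a conjugation action. First I would fix the quaternionic model: identifying $\bR^3$ with $\mathrm{Im}(\bH)=\gen(i,j,k)$, the double cover $f$ sends a unit quaternion $q$ to the rotation $v\mapsto qv\bar q$. The $S^1$ in question is the one restricted along the diagonal from the free $T^2$-action of Lemma~\ref{Vtrivial}, and in the model $w=z_1+z_2j$ the diagonal action $\lambda\cdot(z_1,z_2)=(\lambda z_1,\lambda z_2)$ is exactly left multiplication $w\mapsto\lambda w$ by $\lambda\in S^1\subset\bC\subset\bH$, since $\lambda z_2 j=(\lambda z_2)j$. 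I would record this identification of the domain action first, as it is the only non-computational input.

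Since $f$ is a homomorphism we have $f(\lambda w)=f(\lambda)f(w)$, so equivariance with respect to the stated left-multiplication action on $\SO(3)$ reduces (taking $w=1$) to the single identity
\[
 f(\lambda)=\left(\begin{array}{c c} 1 & 0\\ 0 & D(\lambda^{2})\end{array}\right)\in\SO(3).
\]
This I would verify by computing the conjugation $v\mapsto\lambda v\bar\lambda$ on the basis. Writing $\lambda=\cos\theta+i\sin\theta$, the element $i$ lies in the same complex line as $\lambda$ and is fixed, accounting for the $1$ in the corner. For $j$ and $k$ I would use $j\bar\lambda=\lambda j$ and $k\bar\lambda=\lambda k$ (equivalently $j\lambda=\bar\lambda j$) together with $ij=k$ and $ik=-j$ to obtain $\lambda j\bar\lambda=\lambda^2 j=\cos(2\theta)j+\sin(2\theta)k$ and $\lambda k\bar\lambda=\lambda^2 k=-\sin(2\theta)j+\cos(2\theta)k$. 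This is precisely rotation by $D(\lambda^2)$ in the $(j,k)$-plane, which finishes the proof.

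The entire content of the lemma sits in the angle doubling, so the step I expect to require the most care is the bookkeeping in this last computation: getting the sign of $\sin(2\theta)$ right (this is where $ik=-j$ enters) and confirming that conjugation by $\lambda$ rotates by $2\theta$ rather than $\theta$. The appearance of $\lambda^{2}$ is exactly what is demanded by the matrix and, via Proposition~\ref{trivial}, by the hypothesis $n-m=2$; everything else in the argument is forced once the domain action has been identified as left multiplication.
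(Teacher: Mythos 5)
Your proof is correct and takes essentially the same route as the paper: identify the domain $S^1$-action with left quaternionic multiplication by $\lambda$, use the homomorphism property of $f$ to reduce to computing $f(\lambda)$, and check via $j\bar{\lambda}=\lambda j$ that conjugation by $\lambda$ fixes $i$ and rotates the $(j,k)$-plane by $D(\lambda^2)$. The paper phrases the reduction as $f(\lambda z)(w)=\lambda f(z)(w)\bar{\lambda}$ rather than evaluating at $w=1$, but the content is identical.
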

\begin{proof}
	Identify $\bR^3$ with the subspace of pure quaternions $\{xi+yj+zk\,|x,y,z\in \bR\}\subset \bH$ and $\bS^3\subset \bH$ with the unit quaternions, i.e.\,$z=z_1+z_2j\in \bS^3$ and $w=ix+z_3j\in \bR^3$ for $x\in \bR$ and $z_1,z_2,z_3\in \bC$. We then define
	\begin{align*}
	f(z)(ix+jy+kz)\colon=z\cdot w\cdot \bar{z},
	\end{align*}	
	which is a group homomorphism and preserves the inner product on $\bR^3$. Observe that $zj=j\bar{z}$ for $z\in \bC$ and hence for any pure quaternion $w=ix+zj$ and $\lambda\in S^1\subset \bC$ we have that 
	\[\lambda w \bar{\lambda}=ix+\lambda zj\bar{\lambda}=ix+\lambda z\lambda j=ix+\lambda^2zj,\]
	which corresponds to the rotation by 
		\begin{align}\label{Dlambda}
	  \left(
 \begin{array}{c c}
  1 & 0\\
  0 & D(\lambda^{2})
 \end{array}
 \right)\in \SO(3).
	\end{align}
	Finally, since 
	\begin{align*}
	f(\lambda z)(w)&=\lambda z\cdot w\cdot \bar{z}\bar{\lambda}=
	\lambda f(z)(w) \bar{\lambda}, 
	\end{align*}
	this corresponds to the left multiplication of $f(z)$ by \eqref{Dlambda}.
\end{proof}	 

\subsection*{Acknowledgements} I would like to thank Sam Nariman for introducing me to this aspect of foliation theory and posing many interesting questions in our conversations. I also thank Ronno Das for numerous helpful discussions and reading this note. This research was supported by the Knut and Alice Wallenberg foundation through grant no.\ 2019.0519.

\bibliographystyle{alpha}
\bibliography{../../Bibliography/central-bib}
\end{document}